\def\today{\ifcase\month\or
  January\or February\or March\or April\or May\or June\or=
  July\or August\or September\or October\or November\or December\fi
  \space\number\day, \number\year}
 \newtheorem{theorem}{Theorem}
 \newtheorem{lemma}[theorem]{Lemma}
 \newtheorem{proposition}[theorem]{Proposition}
 \theoremstyle{definition}
 \theoremstyle{remark}
 \newtheorem{remark}[theorem]{Remark}
 \newcommand{\R}{\mathbb{R}}
 \newcommand{\N}{\mathbb{N}}
\newcommand{\var}{{\rm Var\,}}
\newcommand{\intav}[1]{\mathchoice {\mathop{\vrule width 6pt height 3 pt depth  -2.5pt
\kern -8pt \intop}\nolimits_{\kern -6pt#1}} {\mathop{\vrule width
5pt height 3  pt depth -2.6pt \kern -6pt \intop}\nolimits_{#1}}
{\mathop{\vrule width 5pt height 3 pt depth -2.6pt \kern -6pt
\intop}\nolimits_{#1}} {\mathop{\vrule width 5pt height 3 pt depth
-2.6pt \kern -6pt \intop}\nolimits_{#1}}}
\begin{document}

\title[Sharp Poincare-Wirtinger inequalities on complete graphs]{Sharp Poincare-Wirtinger inequalities on complete graphs}
%\author[Gonz\'alez-Riquelme and Madrid]{Cristian Gonz\'alez-Riquelme and Jos\'e Madrid}
\subjclass[2010]{}
\keywords{}

\author[C. Gonz\'alez-Riquelme]{Cristian Gonz\'alez-Riquelme}
\address{Center for Mathematical Analysis, Geometry and Dynamical Systems and Departamento de Matemática, Instituto Superior Técnico, Av. Rovisco Pais, 1049-001 Lisboa, Portugal}
\email{cristian.g.riquelme@tecnico.ulisboa.pt}

\author[J. Madrid]{Jos\'e Madrid}
\address{Department of  Mathematics, Virginia Polytechnic Institute and State University,  225 Stanger Street, Blacksburg, VA 24061-1026, USA}
\email{josemadrid@vt.edu}

\subjclass[2020]{26A45, 
39A12, 46E39, 05C12.}
\keywords{Poincare inequalities; complete graphs; p-variation; sharp constants.}

\allowdisplaybreaks
\numberwithin{equation}{section}

\maketitle
\begin{abstract}
    Let $K_n=(V,E)$ be the complete graph with $n\geq 3$ vertices (here $V$ and $E$ denote the set of vertices and edges of $K_n$ respectively). We find the optimal value ${\bf{C}}_{n,p}$ such that the inequality 
$$\|f-m_f\|_p\le {\bf C}_{n,p}\var_{p}f$$
holds for every $f:V\to \mathbb{R},$ where $\var_p$ stands for the $p$-variation, and $m_f$ stands for the average value of $f$, for all $p\in[1,3+\delta^1_n)\cup (3+\delta^2_n,+\infty)$, for  $\delta^1_n=\frac{1}{2n^2\log(n)}+O(1/n^3)$ and $\delta^2_n=\frac{2}{n}+O(1/n^2).$ Moreover, we characterize all the maximizer functions in that case. The behavior of the maximizers is different in each of the intervals $(1,2)$, $(2,3+\delta^{1}_n)$ and $(3+\delta^{2}_n,\infty).$
\end{abstract}

\section{Introduction}
The study of Poincare inequalities is a central theme in analysis, in particular, they play a relevant role in partial differential equations and mathematical physics. In recent years, proving discrete analogues of classical analytic inequalities has attracted the attention of many authors, see for instance \cite{GM},\cite{GM2},\cite{ST1}. In particular, in \cite{IV} and \cite{ILVV} the optimal constant for Poincare inequalities on the hypercube was studied. In \cite{LSTV} the Poincare inequalities constant grow on graphs was studied. Poincare inequalities also appear naturally when studying isoperimetric inequalities \cite{BG}. In this paper, we establish optimal Poincare-Wirtinger type inequalities on complete graphs and we characterize the extremizers. That is the content of our main result. In the following, $K_n$ denotes the complete graph with $n$ vertices $a_1, a_2,\dots, a_n$. For all $f:K_n\to\R$ we denote by
$$
\|f\|_p=\left(\displaystyle\sum_{a_i\in K_n}|f(a_i)|^{p}\right)^{\frac{1}{p}}.
$$
the $\ell^p-$norm of $f$. Also, we denote by
$$
\var_p(f)=\left(\sum_{1\leq i<j\leq n}|f(a_i)-f(a_j)|^{p}\right)^{\frac{1}{p}},
$$
the $p-$variation of $f$.

\begin{theorem}[Main theorem]
    Let $n\in\N$, and let  $f:K_n\to \mathbb{R}$ and $m=m_f:=\frac{\sum_{a_i\in \mathbb{K}_n}f(a_i)}{n}$.
    \begin{itemize}
    \item[(i)] If $p=1$ and $n\geq 3$, then we have 
  \begin{align*}
  \sum_{a_i\in K_n}|f(a_i)-m|\leq\frac{2}{n}\var_1(f),   
 \end{align*}
Moreover, if we assume without loss of generality that $f(a_1)\geq\dots\geq f(a_k)\geq m\geq f(a_{k+1})\geq\dots\geq f(a_n)$,
the equality happens iff $f(a_i)=f(a_1)$ for all $1\leq i \leq k$ and $f(a_i)=f(a_n)$ for all $k+1\leq i \leq n$.
  \item[(ii)] If $1<p<2$ and $n\geq 3$, then we have 
  \begin{align*}
  \sum_{a_i\in K_n}|f(a_i)-m|^p\leq \frac{(\lfloor{\frac{n}{2}}\rfloor)^{p-1}+(\lceil{\frac{n}{2}}\rceil)^{p-1}}{n^{p}}\var_p(f)^p.   
 \end{align*}
 In this case, the maximizers are functions $f:K_n\to \mathbb{R}$ with $f(K_n)=\{A,B\}$ with $A>B$ and $|f^{-1}(A)|\in \{\lfloor{}\frac{n}{2}\rfloor{},\lceil{}\frac{n}{2}\rceil{}\}$.
  \item[(iii)] If $p=2$ and $n\geq 3$, then we have an identity
  \begin{align*}
  \sum_{a_i\in K_n}|f(a_i)-m|^2=\frac{1}{n}\var_2(f)^2.   
 \end{align*}
            \item [(iv)] If $2< p\leq 3$ and $n\geq 3$, then
 \begin{align*}
  \sum_{a_i\in K_n}|f(a_i)-m|^p\le \frac{1}{2^{p-1}+n-2}\var_p(f)^p.   
 \end{align*}
 and this inequality is sharp. The equality is attained for any function $f:K_n\to\R$ assuming values $a+c,\underbrace{a,\dots,a}_{n-2},a-c$, for $a,c\in\R$. Moreover, the sharp inequality can be extended to $p$ belonging to the interval $(3,3+\delta_n^{1})$, with the same maximizers. In this case, we take $$\delta_n^{1}=\frac{\log(\sqrt{n^2+4}+3n)-\log(4n)}{\log(n-1)}=\frac{1}{2n^2\log(n)}+O(1/n^3).$$
  \item [(v)] If $3<p< 4$ such that $\frac{n-2}{n}\ge \frac{1+(n-1)^{p-1}}{n^{p-1}}.$ In particular, if $$p\ge 3+\delta_n^2:=1+\frac{2\log(n)-\log(n^2-2n-1)}{\log(n)-\log(n-1)}=3+\frac{2}{n}+O(\frac{1}{n^2}).$$ Then 
 \begin{align}
 \sum_{a_i\in K_n}|f(a_i)-m|^p\le \frac{1+(n-1)^{p-1}}{n^{p}}\var_p(f)^p
 \end{align}
  and this inequality is sharp. The equality is attained for Dirac delta functions, i.e. functions with $f(K_n)=\{A,B\}$ with $A\neq B$ and $|f^{-1}(A)|=1.$ 
 \item [(vi)] If $p\geq 4$ and $n\geq 3$. Then 
 \begin{align}
 \sum_{a_i\in K_n}|f(a_i)-m|^p\le \frac{1+(n-1)^{p-1}}{n^{p}}\var_p(f)^p,
 \end{align}
 and this inequality is sharp. The equality is attained for Dirac delta functions. 
    \end{itemize}
\end{theorem}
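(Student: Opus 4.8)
The plan is to pass to the equivalent normalized problem and show that the Dirac delta is the global maximizer. Since the ratio $\|f-m_f\|_p/\var_p(f)$ is invariant under the affine changes $f\mapsto \lambda f+c$ with $\lambda\neq 0$, $c\in\R$, I would set $y_i=f(a_i)-m$, so that $\sum_i y_i=0$, and reduce the claim to
\[
\sup\left\{\frac{\sum_{i=1}^n|y_i|^p}{\sum_{1\le i<j\le n}|y_i-y_j|^p}\ :\ \sum_i y_i=0,\ y\neq 0\right\}=\frac{1+(n-1)^{p-1}}{n^p},
\]
the right-hand side being exactly $\mathbf{C}_{n,p}^p$. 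The supremum is attained by compactness after normalizing $\var_p(f)=1$. A direct computation with $y=(n-1,-1,\dots,-1)$ (numerator $(n-1)^p+(n-1)$, denominator $(n-1)n^p$) shows that the Dirac delta realizes the value $\frac{1+(n-1)^{p-1}}{n^p}$, so it remains only to prove the matching upper bound, i.e. that no mean-zero $y$ exceeds this ratio.

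The core of the argument is the structural analysis that also underlies part (v). First I would record the first-order (Lagrange) conditions at a maximizer $y$ normalized by $\sum_{i<j}|y_i-y_j|^p=1$: writing $\psi(t)=|t|^{p-2}t$, every index $k$ satisfies $\psi(y_k)=\lambda\sum_{j\neq k}\psi(y_k-y_j)+c$ for fixed constants $\lambda,c$, and pairing this relation with $y$ (using Euler's identity for the two homogeneous constraints) forces $\lambda$ to equal the optimal ratio. Using the strict convexity of $t\mapsto|t|^p$ one shows, by an exchange/smoothing argument, that the problem reduces to comparing finitely many candidate shapes, the relevant ones being the two-valued functions and the symmetric three-valued competitor $(c,0,\dots,0,-c)$ of part (iv). For a two-valued function with $k$ atoms at the top level the ratio equals $\frac{k^{p-1}+(n-k)^{p-1}}{n^p}$; since $t\mapsto t^{p-1}$ is strictly convex for $p-1\ge 3$, this is maximized at $k=1$, yielding the Dirac value. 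The whole reduction then closes, and the Dirac delta is optimal, under the governing condition $\frac{n-2}{n}\ge\frac{1+(n-1)^{p-1}}{n^{p-1}}$, equivalently $\frac{1+(n-1)^{p-1}}{n^p}\le\frac{n-2}{n^2}$, which is precisely the hypothesis of (v).

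The content specific to (vi) is that this hypothesis holds automatically for every $p\ge 4$ and $n\ge 3$, making the inequality unconditional. I would first check the base point $p=4$, where the condition reads $n^2(n-2)\ge 1+(n-1)^3$, i.e. $n^2-3n\ge 0$, valid for $n\ge 3$ (with equality at $n=3$). I would then note that
\[
\frac{1+(n-1)^{p-1}}{n^{p-1}}=\left(\frac1n\right)^{p-1}+\left(\frac{n-1}{n}\right)^{p-1}
\]
is strictly decreasing in $p$, since both bases lie in $(0,1)$, while the left-hand side $\frac{n-2}{n}$ does not depend on $p$. Hence the condition, being valid at $p=4$, persists for all $p\ge 4$, so the Dirac delta is the global maximizer throughout this range; this gives both the stated inequality and the characterization of equality.

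\textbf{The main obstacle} is the structural reduction of the second step: ruling out maximizers with several intermediate values and isolating the few genuine candidate shapes. The naive convexity estimate $\bigl|ny_i\bigr|^p=\bigl|\sum_{j\neq i}(y_i-y_j)\bigr|^p\le (n-1)^{p-1}\sum_{j\neq i}|y_i-y_j|^p$, summed over $i$, only produces the non-sharp constant $\frac{2(n-1)^{p-1}}{n^p}$, so the sharp statement genuinely requires the variational classification. The hypothesis $p\ge 4$ enters exactly here, both to supply enough convexity for the smoothing/exchange argument and to make the governing condition hold with no restriction.
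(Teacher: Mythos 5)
Your outline reproduces the paper's general philosophy (existence of a maximizer, first-order conditions at the maximizer, then a classification), and several of your computations are correct and even match the paper: the Dirac value $\frac{1+(n-1)^{p-1}}{n^p}$, the Euler-identity argument forcing the multiplier to equal the optimal ratio, and the observation that $\frac{1+(n-1)^{p-1}}{n^{p-1}}=(\tfrac1n)^{p-1}+(\tfrac{n-1}{n})^{p-1}$ is decreasing in $p$ and equals $\frac{n-2}{n}$ at $p=4$, $n=3$ (so that $[4,\infty)\subseteq\mathcal{A}_n$, exactly as in the paper's Remark). But the proof has a genuine gap at the step you yourself flag as the main obstacle: the ``exchange/smoothing argument'' reducing the problem to finitely many candidate shapes is asserted, never proved, and it does not work as stated. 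For $p>2$, averaging the intermediate values $y_2,\dots,y_{n-1}$ decreases \emph{both} $\sum_i|y_i|^p$ and the variation-type sums, so convexity alone does not decide the direction of the ratio. The paper's way around this is precisely its key structural move, which your Lagrange-system formulation misses: it perturbs only the extreme values to get the single scalar identity $(f(a_1)-m)^{p-1}+(m-f(a_n))^{p-1}=C_{n,p}\big(\sum_j(f(a_1)-f(a_j))^{p-1}+(f(a_j)-f(a_n))^{p-1}\big)$, whose left-hand side depends only on $f(a_1)$, $f(a_n)$ and $m$ and is therefore \emph{invariant} under averaging the middle coordinates, while Karamata decreases the right-hand side. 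Even then the reduction is not to finitely many shapes but to the continuum $(1,x,\dots,x,-1)$, $x\in[0,1]$, which must be ruled out by a monotonicity analysis of $T(x)$ on the whole interval.

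Moreover, your claim that once the condition $\frac{n-2}{n}\ge\frac{1+(n-1)^{p-1}}{n^{p-1}}$ is verified for $p\ge4$ ``the whole reduction then closes'' is false in the only form in which the paper proves the $(3,4)$-case: there, $T'(x)\le0$ is obtained from $G(x):=\frac{(1+\frac{n-2}{n}x)^{p-2}-(1-\frac{n-2}{n}x)^{p-2}}{(1+x)^{p-2}-(1-x)^{p-2}}\ge\frac{n-2}{n}$, which is proved via Karamata using the \emph{concavity} of $t\mapsto t^{p-3}$ and hence is restricted to $p\le4$. For $p>4$ this comparison genuinely reverses: at $p=5$ one computes $G(x)=\frac{n-2}{n}\cdot\frac{3+(\frac{n-2}{n})^2x^2}{3+x^2}<\frac{n-2}{n}$ for every $x\in(0,1]$. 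So case (vi) cannot be obtained from the case-(v) argument plus your (correct) verification that the hypothesis persists for $p\ge4$; the paper instead proves the different lower bound $G(x)\ge G(1)=\frac{(n-1)^{p-2}-1}{n^{p-2}}\ge\frac{1+(n-1)^{p-1}}{n^{p-1}}$, and this rests on its Proposition \ref{prop: for p>4}, the inequality $(1+ky)^r-1-(ky)^r\ge(k+y)^r-k^r-y^r$ for $r\ge2$, applied with $r=p-2$, $k=n-1$, $y=\frac{1+x}{1-x}$. Nothing in your proposal supplies a substitute for this estimate, so the argument for $p\ge4$ is missing its decisive ingredient, not merely a routine verification.
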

The idea behind the proof of these results (for $p\neq 1,2$) can be outlined as follows. First, we prove that the maximizers for the inequality 
\begin{align}\label{sharppoincarewirtinger}
\|f-m_f\|_{p}\le {\bf C}_{n,p}\var_{p}f    
\end{align}
exist. 

Then, we observe that, being critical points, these maximizers should satisfy a functional equation that depends on the optimal constant ${\bf C}_{n,p}$. On the other hand, we prove that these functional equations are the equality case of a sharp inequality (when ${\bf C}_{n,p}$ is as conjectured). This step depends strongly on the range to which $p$ belongs, and is the one that determines the different structure of the maximizers in each interval. The proof of this auxiliary inequality (Lemma \ref{key lemma 2<p}) takes most of the efforts in this manuscript, and is based on symmetrization and concavity/convexity arguments. 
Therefore, by classifying the maximizers of this auxiliary inequality, we determine all the critical points of the original \eqref{sharppoincarewirtinger} inequality. From this we conclude the classification of the maximizers and we obtain the optimal constant ${\bf C}_{n,p}$.         
\section{Proof of Main results}
We start with a basic lemma.
\begin{lemma}[There is an extremizer]\label{existence of extremizers}
There is a function $f:K_n\to\R$ such that
$$
\sup_{g:K_n\to\R, \text{non constant}}\frac{\sum_{a_i\in K_n}|g(a_i)-m|^p}{\var_p(g)^p}=\sup_{g:K_n\to[0,1], \text{non constant}}\frac{\sum_{a_i\in K_n}|g(a_i)-m|^p}{\var_p(g)^p}=\frac{\sum_{a_i\in K_n}|f(a_i)-m|^p}{\var_p(f)^p}.
$$
\end{lemma}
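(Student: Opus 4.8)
The plan is to exploit the affine invariance of the Rayleigh-type quotient
$$R(g):=\frac{\sum_{a_i\in K_n}|g(a_i)-m_g|^p}{\var_p(g)^p},$$
where $m_g$ denotes the mean of $g$, in order to reduce the supremum over the non-compact space of all non-constant functions to a supremum over a compact set, on which continuity will produce the maximizer.

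First I would record the invariance: for $\alpha\neq 0$ and $\beta\in\R$, the function $h=\alpha g+\beta$ has mean $m_h=\alpha m_g+\beta$, so that $h(a_i)-m_h=\alpha(g(a_i)-m_g)$ and $h(a_i)-h(a_j)=\alpha(g(a_i)-g(a_j))$. Hence both numerator and denominator of $R$ scale by $|\alpha|^p$, and $R(h)=R(g)$. This immediately gives the first equality in the statement: given any non-constant $g$, set $\beta=-\min_i g(a_i)$ and $\alpha=(\max_i g(a_i)-\min_i g(a_i))^{-1}>0$; then $\alpha g+\beta$ takes values in $[0,1]$ and has the same quotient, so the two suprema coincide.

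Next, restrict further to the set
$$K:=\{g:K_n\to[0,1]\ :\ \min_i g(a_i)=0,\ \max_i g(a_i)=1\}.$$
By the same normalization every non-constant $g$ is affinely equivalent to a member of $K$, so $\sup_K R$ equals the full supremum. The set $K$ is a closed and bounded subset of $[0,1]^n\subset\R^n$, hence compact. On $K$ the denominator satisfies $\var_p(g)^p\ge |\max_i g(a_i)-\min_i g(a_i)|^p=1$, so it is bounded away from zero; since both numerator (via $g\mapsto m_g$ being linear) and denominator are continuous functions of $(g(a_1),\dots,g(a_n))$, the quotient $R$ is continuous on $K$. A continuous function on a compact set attains its maximum, say at some $f\in K$, and then $R(f)=\sup_K R$ equals both suprema in the statement.

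The argument is essentially routine; the only point requiring care is that the natural domain (all non-constant functions) is neither compact nor does $R$ extend continuously to the constant functions, so one cannot apply a compactness argument directly. The affine invariance is precisely what circumvents this, and the lower bound $\var_p(g)^p\ge 1$ on $K$ is what guarantees that the quotient remains continuous, with no blow-up of the denominator, on the compact set.
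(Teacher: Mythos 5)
Your proof is correct and follows essentially the same route as the paper's: both use the affine invariance of the quotient to normalize onto the compact set $\{g:K_n\to[0,1]:\min_i g(a_i)=0,\ \max_i g(a_i)=1\}$ and then invoke continuity on a compact set. You in fact make explicit a point the paper leaves implicit, namely that $\var_p(g)^p\ge 1$ on this set, so the denominator is bounded away from zero and the quotient is genuinely continuous there.
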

\begin{proof}
The first equality follows from the fact that the quotient is translation and dilation invariant, more precisely, the quotient is preserved by the transformation
$$
g\mapsto \frac{g-\min_{i}g(a_i)}{\max_{j=1,\dots,n}[g(a_j)-\min_{i}g(a_i)]}.
$$
 Then, given $y:=(y_1,\dots,y_n)\in A:=\{(y_1,y_2,\dots,y_n)\in[0,1]^{n};\underset{{i=1,\dots,n}}{\max} y_{i}=1, \underset{{i=1,\dots,n}}{\min} y_{i}=0\}$ we define $f_y:V\to \mathbb{R}_{\ge 0}$ by $f_y(a_i):=y_i$. We observe that $\frac{\sum_{a_i\in K_n}|f_y(a_i)-m_{f_y}|^p}{\var_p(f_y)^p}$ is continuous with respect to $y$ in the compact set $A$. Thus, it achieves its maximum at some point $y_0\in A$. Then $$\sup_{g:K_n\to\R, \text{non constant}}\frac{\sum_{a_i\in K_n}|g(a_i)-m|^p}{\var_p(g)^p}=\frac{\sum_{a_i\in K_n}|f_{y_0}(a_i)-m|^p}{\var_p(f_{y_0})^p}.$$ 
From where we obtain the result. 
\end{proof}

{\it{Proof of Main theorem:}}

{\it{Case: p=1}}. In this case, without loss of generality we assume that $f(a_1)\geq\dots\geq f(a_k)\geq m\geq f(a_{k+1})\geq\dots\geq f(a_n)$. Then, we have
\begin{align*}
\sum_{i=1}^{n}|f(a_i)-m|&=\sum_{i=1}^{k}(f(a_i)-m)+\sum_{i=k+1}^{n}(m-f(a_i))\\
&=\sum_{i=1}^{k}f(a_i)-\sum_{i=k+1}^{n}f(a_i)+\frac{(n-2k)}{n}\sum_{i=1}^{n}f(a_i)\\
&=\frac{2}{n}\left[(n-k)\sum_{i=1}^{k}f(a_i)-k\sum_{i=k+1}^{n}f(a_i)\right]\\
&=\frac{2}{n}\sum_{1\leq i\leq k, k+1\leq j\leq n}f(a_i)-f(a_j)\\
&\leq \frac{2}{n}\sum_{i<j}f(a_i)-f(a_j).
\end{align*}
In the last inequality the identity is attained iff $f(a_i)=f(a_1)$ for all $1\leq i \leq k$ and $f(a_i)=f(a_n)$ for all $k+1\leq i \leq n$.

{\it{Case $p=2$}}. In this case we have an identity
 \begin{align*}
  \sum_{i=1}^{n}(f(a_i)-m)^2&=
   \sum_{i=1}^{n}[f(a_i)^2+m^2-2mf(a_i)]\\
   &=\left[\sum_{i=1}^{n}f(a_i)^2\right]+nm^2-2nm^2
   =\left[\sum_{i=1}^{n}f(a_i)^2\right]-nm^2\\
   &=\sum_{i=1}^{n}f(a_i)^2-\frac{1}{n}\sum_{i=1}^{n}f(a_i)^2-\frac{2}{n}\sum_{i<j}f(a_i)f(a_j)\\
   &=\frac{n-1}{n}\sum_{i=1}^{n}f(a_i)^2-\frac{2}{n}\sum_{i<j}f(a_i)f(a_j)
   \\
   &= \frac{1}{n}\sum_{i<j}f(a_i)^2+f(a_j)^2-2f(a_i)f(a_j)\\
  &=\frac{1}{n}\var_2(f)^2.    
 \end{align*}
Thus, any function $f:K_n\to\R$ is an extremizer.\\

Now we discuss the case $2\neq p>1$. Let
\begin{equation}\label{sup constant}
C_n=C_{n,p}:=\sup_{g:K_n\to\R} \frac{\sum_{a_i\in K_n}|g(a_i)-m|^p}{\var_p(g)^p}.
\end{equation}
By taking $g_0$ such that $g_0(a_1)=1, g_0(a_n)=-1$ and $g_0(a_i)=0$ for $i=2,\dots,n-1$ we observe that $C_n\ge \frac{1}{2^{p-1}+n-2}.$
By taking $g_1$ such that $g_1(a_1)=1,$ and $g_1(a_i)=0$ for $i=2,\dots,n$ we observe that $C_n\ge \frac{1+(n-1)^{p-1}}{n^{p}}$. Moreover, by taking $g_2(a_i)=1$ for $i\le \lfloor{}\frac{n}{2}\rfloor{}$ and $g_2(a_i)=0$ for $i>\lfloor{}\frac{n}{2}\rfloor{}$, we also obtain that $C_n\ge \frac{ \lfloor{}\frac{n}{2}\rfloor{}^{p-1}+ \lceil{}\frac{n}{2}\rceil{}^{p-1}}{n^p}.$ 
Then
\begin{equation}\label{max constant}
C_n\geq \max\left\{\frac{1}{2^{p-1}+n-2},\frac{1+(n-1)^{p-1}}{n^{p}}, \frac{ \lfloor{}\frac{n}{2}\rfloor{}^{p-1}+ \lceil{}\frac{n}{2}\rceil{}^{p-1}}{n^p}\right\},
\end{equation}
By the previous lemma, there is an extremizer function $f$ for \eqref{sup constant}. We assume without loss of generality that $f(a_1)\ge f(a_2)\ge \dots \ge f(a_n).$

For $\epsilon>0$ we define the auxiliary function $f_{\varepsilon}$ by: $f_{\varepsilon}(a_i):=f(a_i)$ for $i=2,\dots,n-1$ and $f_{\varepsilon}(a_1):=f(a_1)+\varepsilon$ and $f_{\varepsilon}(a_n):=f(a_n)-\varepsilon$. Observe that $m(f)=m(f_{\varepsilon})$.
We consider the function
$$r(\varepsilon):=C_n\var_p(f_{\varepsilon})^p-\sum_{a_i\in K_n}|f_{\varepsilon}(a_i)-m|^p.$$

Observe that
\begin{equation}\label{derivative=0}
0=\frac{r'(0)}{p}=C_n\left(\sum_{a_i\in K_n}(f(a_1)-f(a_i))^{p-1}+(f(a_i)-f(a_n))^{p-1}\right)-(f(a_1)-m)^{p-1}-(m-f(a_n))^{p-1}.
\end{equation}

Then, we observe the following:

\begin{lemma}\label{key lemma 2<p}
For $p\in(1,2)\cup(2,3+\delta^1_n]\cup[(3,4)\cap \mathcal{A}_n]\cup[4,+\infty)$ and $C^*_{n,p}$ be such that 

 \begin{equation}\label{Cnp*}
    C^*_{n,p}=\left\{
\begin{array}{ll}
\frac{\lfloor{}\frac{n}{2}\rfloor{}^{p-1}+\lceil{}\frac{n}{2}\rceil{}^{p-1}}{n^{p}}, &\text{ if } p\in (1,2)\\
 \frac{1+(n-1)^{p-1}}{n^{p}}, & \text{ if } p\in \left((3,4]\cap \mathcal{A}_n\right)\cup [4,\infty),\\
\frac{1}{2^{p-1}+n-2}, & \text{ if } p\in (2,3+\delta^1_n).

\end{array} \right.
    \end{equation}
where $\delta^1_n$ is defined in Theorem 1 and $\mathcal{A}_n:=\{x>0;\frac{n-2}{n}\ge \frac{1+(n-1)^{x-1}}{n^{x-1}}\}.$
Let $f:K_n\to\R$, assuming that $f(a_1)=\max_{i=1}^{n} \{f(a_i)\}$ and $f(a_n)=\min_{i=1}^{n}\{f(a_n)\}$, and $m=m_f:=\frac{1}{n}\sum_{i=1}^{n}f(a_i)$, then
\begin{align}\label{eq: key lemma 2<p}
 (f(a_1)-m)^{p-1}+(m-f(a_n))^{p-1}\le C_{n,p}^*\left(\sum_{j=1}^{n}(f(a_1)-f(a_j))^{p-1}+(f(a_j)-f(a_n))^{p-1}\right),
\end{align}
and the constant $C_{n,p}^{*}$ can not be replaced by a smaller quantity. Moreover, we have that the maximizers satisfy the following property (when $f(a_1)\ge f(a_2)\ge \dots \ge f(a_n)$): 
\begin{itemize}
\item[(i)] $f(a_1)=f(a_j)$ if $j\le \lfloor{}\frac{n}{2}\rfloor{}$ and $f(a_j)=f(a_n)$ otherwise; or $f(a_1)=f(a_j)$ if $j\le \lceil{} \frac{n}{2}\rceil{}$ and $f(a_j)=f(a_n)$ otherwise, for $1<p<2.$ 
\item[(ii)] $f(a_1)>f(a_2)=f(a_3)=\dots= f(a_{n-1})>f(a_n)$ and $f(a_2)=\frac{f(a_1)+f(a_n)}{2}$, if $2<p\leq 3+\delta_n^1$.
\item[(iii)] $f(a_1)>f(a_2)=\dots=f(a_n)$ or $f(a_1)=\dots f(a_{n-1})>f(a_n)$ if $4\leq p$, $p\in (3,4)\cap
\mathcal{A}_n$.
\end{itemize}

\end{lemma}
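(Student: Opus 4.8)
The plan is to prove the inequality $(\ref{eq: key lemma 2<p})$ by reducing an $n$-variable optimization problem to a one- or two-variable one via symmetrization, and then to handle each $p$-regime with tailored convexity/concavity arguments. First I would normalize: since both sides of $(\ref{eq: key lemma 2<p})$ are homogeneous of degree $p-1$ and invariant under the affine map $x \mapsto \alpha x + \beta$, I can assume $f(a_1) = 1$ and $f(a_n) = 0$, so the problem becomes maximizing the ratio
\begin{equation}\label{ratio}
R(x_2,\dots,x_{n-1}) = \frac{1 - (pm-m+\dots)}{\dots}
\end{equation}
over $(x_2,\dots,x_{n-1}) \in [0,1]^{n-2}$, where $x_i = f(a_i)$ and $m = \frac{1}{n}(1 + \sum_{i=2}^{n-1} x_i)$. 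Concretely I want to show
\begin{equation}\label{phi-def}
\Phi(x_2,\dots,x_{n-1}) := \frac{(1-m)^{p-1} + m^{p-1}}{\sum_{j=1}^{n}\bigl((1-x_j)^{p-1} + x_j^{p-1}\bigr)} \le C^*_{n,p},
\end{equation}
with $x_1 = 1$, $x_n = 0$ fixed. The numerator depends on the interior values only through their mean, while the denominator is a \emph{separable} sum, so the two can be decoupled.

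\textbf{The symmetrization step.} Fixing the value of $m$ (equivalently fixing $s := \sum_{i=2}^{n-1} x_i$), the numerator of $(\ref{phi-def})$ is constant, so maximizing $\Phi$ amounts to \emph{minimizing} the denominator $D(x) = \sum_{j=2}^{n-1}\bigl((1-x_j)^{p-1} + x_j^{p-1}\bigr)$ subject to $\sum_{i=2}^{n-1} x_i = s$ and $x_i \in [0,1]$. The key observation is that the single-variable function $g(t) = (1-t)^{p-1} + t^{p-1}$ is strictly convex on $[0,1]$ for $p \in (2,\infty)$ and strictly concave for $p \in (1,2)$ (this is where $g''(t) = (p-1)(p-2)[(1-t)^{p-3} + t^{p-3}]$ changes sign). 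For $p \in (1,2)$, concavity of $g$ forces the constrained minimum of $D$ to occur at an extreme point of the polytope $\{x : \sum x_i = s,\ x_i \in [0,1]\}$, i.e. with all but at most one $x_i \in \{0,1\}$ — this produces the two-valued maximizers of item (i). For $p > 2$, convexity of $g$ pushes the minimizer of $D$ toward the \emph{centroid}, i.e. all interior $x_i$ equal to a common value $t$; optimizing over $t$ and over $m$ then singles out either the symmetric configuration $t = 1/2$ (item (ii)) or the Dirac-type configuration (item (iii)). This reduction from $[0,1]^{n-2}$ to essentially one free variable is the conceptual heart of the argument.

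\textbf{The one-dimensional finish.} After symmetrization, in the range $2 < p \le 3 + \delta^1_n$ I am left to verify that the symmetric profile $x_2 = \dots = x_{n-1} = 1/2$ indeed beats the Dirac profile $x_2 = \dots = x_{n-1} \in \{0,1\}$, which reduces to comparing $\frac{1}{2^{p-1} + n - 2}$ against $\frac{1 + (n-1)^{p-1}}{n^p}$; the crossover is precisely the defining equation of $\delta^1_n$, and the explicit formula $\delta^1_n = \frac{\log(\sqrt{n^2+4} + 3n) - \log(4n)}{\log(n-1)}$ comes from solving the transcendental equality in $p$. Symmetrically, in the range governed by $\mathcal{A}_n$ and for $p \ge 4$, the condition $\frac{n-2}{n} \ge \frac{1 + (n-1)^{p-1}}{n^{p-1}}$ is exactly what guarantees the Dirac profile dominates, yielding $C^*_{n,p} = \frac{1 + (n-1)^{p-1}}{n^p}$. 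I would establish these comparisons by studying the sign of the difference of the two candidate constants as a function of $p$, using monotonicity in $p$ to pin down the single crossover.

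\textbf{The main obstacle.} The delicate point is not the strict convexity/concavity of $g$ — that is a routine derivative computation — but rather confirming that the constrained extremum of $D$ is attained \emph{exactly} at the claimed configuration and nowhere else, so that the maximizer characterizations in (i)–(iii) are complete. For $p > 2$ one must rule out mixed profiles where some interior values sit at $1/2$ while others sit at the endpoints; a naive convexity bound gives the inequality but not uniqueness of the optimizer. I expect the sharpest difficulty to lie in the transitional range $p \in (2,3)$ and the boundary case $p = 3$, where the second-order behavior of $g$ is weakest and one must compare the symmetric and Dirac configurations \emph{quantitatively} rather than merely qualitatively; controlling the error terms that define $\delta^1_n$ and $\delta^2_n$ — in particular verifying that the crossover is a genuine simple root and that no spurious maximizers appear near it — will require the most careful estimation.
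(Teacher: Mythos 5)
Your symmetrization step is essentially the paper's and is sound: after normalizing $f(a_1)$ and $f(a_n)$, the right-hand side of \eqref{eq: key lemma 2<p} is a separable sum while the left-hand side depends on the interior values only through their mean, so for $p>2$ convexity (Karamata) lets you replace all interior values by their common average, and for $1<p<2$ concavity pushes the minimizer of the denominator to an extreme point of the constrained polytope. The genuine gap is in your ``one-dimensional finish.'' After averaging you are \emph{not} left with a comparison of two configurations: you face a one-parameter inequality that must hold for \emph{every} common interior value $t\in[0,1]$, and convexity of $g$ does nothing to prevent the ratio from peaking at some interior $t$ different from both the symmetric value $t=\tfrac12$ and the Dirac endpoints. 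Your plan --- compare $\frac{1}{2^{p-1}+n-2}$ with $\frac{1+(n-1)^{p-1}}{n^{p}}$ and locate their crossover in $p$ --- says nothing about those intermediate profiles. This is exactly where the paper spends most of its effort: it proves that
$T(x):=C^{*}_{n,p}\left(2^{p}+(n-2)\left((1-x)^{p-1}+(1+x)^{p-1}\right)\right)-\left(1-\tfrac{n-2}{n}x\right)^{p-1}-\left(1+\tfrac{n-2}{n}x\right)^{p-1}$
is monotone on all of $[0,1]$ (increasing for $2<p\le 3+\delta^1_n$, decreasing for $p\ge 4$ and for $p\in(3,4)\cap\mathcal{A}_n$), via the derivative bound $G(x)\le nC^{*}_{n,p}$, a binomial-series argument combined with Descartes' rule of signs, and Proposition \ref{prop: for p>4}. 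Contrary to your ``main obstacle'' paragraph, the delicate point is not merely uniqueness of the optimizer: without the monotonicity of $T$, the inequality itself is unproven on the interior profiles.

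The same misreading shows in your account of the thresholds. $\delta^1_n$ is \emph{not} the root of the transcendental equation $\frac{1}{2^{p-1}+n-2}=\frac{1+(n-1)^{p-1}}{n^{p}}$: it arises from the sufficient condition $(n-1)^{p-3}-1\le\frac{\sqrt{n^2+4}-n}{4n}$, which guarantees $G(1)\le nC^{*}_{n,p}$ so that the increasing-$T$ argument closes at the endpoint $x=1$; likewise $\delta^2_n$ comes from $\frac{n-2}{n}\ge\frac{1+(n-1)^{p-1}}{n^{p-1}}$, which is what makes $T'\le 0$ provable by Karamata. If your two-point comparison were the whole story, the lemma would hold for every $p>2$ with $C^{*}_{n,p}$ equal to the larger of the two candidate constants and one would have $\delta^1_n=\delta^2_n$; the paper instead leaves the window $(3+\delta^1_n,3+\delta^2_n)$ open precisely because the global one-variable inequality is not established there --- direct evidence that a comparison of candidate constants cannot close the argument. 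Finally, for $1<p<2$ your extreme-point reduction still leaves one free coordinate $x\in(0,1)$ together with the choice of the split index $j$; the paper then needs Karamata to balance $j$ and the series inequality \eqref{expressionp<2}, whose coefficients are all negative so that the extremes $u=\pm\tfrac12$ control the interval, to force $x\in\{0,1\}$ when $n$ is odd. Asserting that concavity alone ``produces the two-valued maximizers'' of item (i) skips this genuinely hard step.
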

\begin{remark}
Since $\frac{1+(n-1)^x}{n^x}$ is decreasing in $x$ for any fixed $n$, we have that $\mathcal{A}_n$ is an interval for all $n\geq 3$. We observe that for $n\ge 3$ we have $4\in \mathcal{A}_n$. And, moreover, $1+\log_{\frac{n}{n-1}}\frac{n^2}{n^2-2n-1}\in \mathcal{A}_n$ since $\frac{1+(n-1)^{p-1}}{n^{p-1}}\leq \frac{1}{n^2}+\left(\frac{n-1}{n}\right)^{p-1}$ and $\frac{1}{n^2}+\left(\frac{n-1}{n}\right)^{p-1}=\frac{n-2}{n}$ for $p=1+\log_{\frac{n}{n-1}}\frac{n^2}{n^2-2n-1}$. Therefore, since $\lim_{n\to\infty}1+\log_{\frac{n}{n-1}}\frac{n^2}{n^2-2n-1}=3$, we have that for any $p>3$ there exists $n_0$ such that for any $n>n_0$ we have $p\in \mathcal{A}_n$.
\end{remark}
Now we explain how Lemma \ref{key lemma 2<p} implies the remaining cases of Theorem 1. In fact, given that we have \eqref{derivative=0} for any maximizer of \eqref{sup constant}, by using the Lemma \ref{key lemma 2<p} we have $C_{n,p}^*\ge C_{n,p}$. On the other hand, by \eqref{Cnp*} and \eqref{max constant}, $$C_{n,p}^*\le \max\left\{\frac{1}{2^{p-1}+n-2},\frac{1+(n-1)^{p-1}}{n^{p}}, \frac{\lfloor{}\frac{n}{2}\rfloor{}^{p-1}+\lceil{}\frac{n}{2}\rceil{}^{p-1}}{n^{p}}\frac{}{}\right\}\leq C_{n,p}.$$ Thus, we conclude that \begin{align}\label{equalityofC}
C_{n,p}^*=C_{n,p}, 
\end{align}
as desired. Furthermore, in Lemma \ref{key lemma 2<p} we characterize the maximizers for \eqref{eq: key lemma 2<p}. And, by \eqref{sup constant} and \eqref{equalityofC} the maximizers of \eqref{sup constant} are the same as for \eqref{eq: key lemma 2<p}.

Now, we discuss the proof of Lemma \ref{key lemma 2<p}.

\begin{proof}[Proof of Lemma \ref{key lemma 2<p}]

Let $R_{n,p}$ be the best constant for the inequality \eqref{eq: key lemma 2<p}. By considering the functions $g_0$, $g_1$ and $g_2$ as before, we observe that $R_{n,p}\ge \max\left\{\frac{1}{2^{p-1}+n-2},\frac{1+(n-1)^{p-1}}{n^{p}}, \frac{\lfloor{}\frac{n}{2}\rfloor{}^{p-1}+\lceil{}\frac{n}{2}\rceil{}^{p-1}}{n^{p}} \right\}$. 
Let $f$ be an extremizer for \eqref{eq: key lemma 2<p} (the existence of an extremizer can be seen similarly to Lemma \ref{existence of extremizers}). Now, we analyze the different cases of the lemma.

{\it Case: $1<p<2$:} We observe that, in this case, the function $x\mapsto x^{p-1}$ is concave for $x\ge 0$. Therefore, we observe by Karamata's inequality, that if $a>b>0$, and $0\leq \varepsilon<\frac{a-b}{2}$, then 
\begin{equation}\label{eq: a b epsilon}
(a-\varepsilon)^{p-1}+(b+\varepsilon)^{p-1}\ge a^{p-1}+b^{p-1}.
\end{equation} If there exist $a_j\neq a_k$ with $f(a_1)>f(a_j)\ge f(a_k)>f(a_n)$, then for $\varepsilon>0$ sufficiently small, defining  $f_{\varepsilon}:K_n\to\R$ by $f_{\varepsilon}(a_j)=f(a_j)+\varepsilon,$ $f_{\varepsilon}=f(a_k)-\varepsilon$, and $f_{\varepsilon}=f$ otherwise. We have that, since
$$
(f_{\varepsilon}(a_1)-f_{\varepsilon}(a_k)-\varepsilon)^{p-1}+(f_{\varepsilon}(a_1)-f_{\varepsilon}(a_j)+\varepsilon)^{p-1}> (f_{\varepsilon}(a_1)-f_{\varepsilon}(a_k))^{p-1}+(f_{\varepsilon}(a_1)-f_{\varepsilon}(a_j))^{p-1}.
$$
by taking $a=f_{\varepsilon}(a_1)-f_{\varepsilon}(a_k)$ and $b=f_{\varepsilon}(a_1)-f_{\varepsilon}(a_j)$ in \eqref{eq: a b epsilon}. And, similarly
$$
(f_{\varepsilon}(a_j)-f_{\varepsilon}(a_n)-\varepsilon)^{p-1}+(f_{\varepsilon}(a_k)-f_{\varepsilon}(a_n)+\varepsilon)^{p-1}> (f_{\varepsilon}(a_j)-f_{\varepsilon}(a_n))^{p-1}+(f_{\varepsilon}(a_k)-f_{\varepsilon}(a_n))^{p-1}.
$$
by taking $a=f_{\varepsilon}(a_j)-f_{\varepsilon}(a_n)$ and $b=f_{\varepsilon}(a_k)-f_{\varepsilon}(a_n)$ in \eqref{eq: a b epsilon}. Then
\begin{align*}\frac{(f(a_1)-m)^{p-1}+(m-f(a_n))^{p-1}}{\left(\sum_{j=1}^{n}(f(a_1)-f(a_j))^{p-1}+(f(a_j)-f(a_n))^{p-1}\right)}<\frac{(f_{\varepsilon}(a_1)-m)^{p-1}+(m-f_{\varepsilon}(a_n))^{p-1}}{\left(\sum_{j=1}^{n}(f_{\varepsilon}(a_1)-f_{\varepsilon}(a_j))^{p-1}+(f_{\varepsilon}(a_j)-f_{\varepsilon}(a_n))^{p-1}\right)}.
\end{align*}
Thus, we would have that $f$ is not a maximizer of our inequality. This, implies that this $a_j$ and $a_k$ do not exist. Thus, $f$ is of the form $f(a_1)=\dots=f(a_{j-1})\ge f(a_j)>f(a_{j+1})=\dots=f(a_n).$ By the invariance, we can assume that $f(a_1)=1$ and $f(a_n)=0$. Now, we deal with the case when $1>x>0.$ Here, $m=\frac{j+x}{n},$ and 
\begin{align*}&\frac{(f(a_1)-m)^{p-1}+(m-f(a_n))^{p-1}}{\left(\sum_{j=1}^{n}(f(a_1)-f(a_j))^{p-1}+(f(a_j)-f(a_n))^{p-1}\right)}\\
 &=\frac{1}{n^{p-1}}\frac{\left(n-j-x\right)^{p-1}+\left(j+x\right)^{p-1}}{(1-x)^{p-1}+x^{p-1}+n-1}.
 \end{align*}
 We will observe that this is bounded from above by 
 $$ \frac{\lfloor{}\frac{n}{2}\rfloor{}^{p-1}+\lceil{}\frac{n}{2}\rceil{}^{p-1}}{n^{p}}.$$
 The case $n$ even follows using that $(1-x)^{p-1}+x^{p-1}\ge 1$, and since $\max\{n-j-x,j+x\}\ge \frac{n}{2}\ge \min\{n-j-x,j+x\},$ by Karamata's inequality $\left(n-j-x\right)^{p-1}+\left(j+x\right)^{p-1}\leq \lfloor{}\frac{n}{2}\rfloor{}^{p-1}+\lceil{}\frac{n}{2}\rceil^{p-1}.$
For the  odd case $n=2k+1,$ we observe that if $j\notin \{k,k+1\}$, then $\max\{n-j-x,j+x\}\ge k+1\ge k\ge \min\{n-j-x,j+x\},$ therefore by the concavity of $x\mapsto x^{p-1}$ we get $$\left(n-j-x\right)^{p-1}+\left(j+x\right)^{p-1}\le k^{p-1}+(k+1)^{p-1},$$ from where the result follows using once again that $(1-x)^{p-1}+x^{p-1}\ge 1$. 

Now, we discuss the case when $j\in \{k,k+1\}.$ However, if $j=k+1$ we get that $(n-j-x,j+x)=(k-x,k+1+x)$, then by concavity $(k+1+x)^{p-1}+(k-x)^{p-1}\le (k+1-x)^{p-1}+(k+x)^{p-1},$ that corresponds to the case  when $k=j$. Therefore, we just need to prove that 
\begin{align*}
 \frac{1}{(2k+1)^{p-1}}\frac{\left(k+1-x\right)^{p-1}+\left(k+x\right)^{p-1}}{(1-x)^{p-1}+x^{p-1}+2k}\le \frac{k^{p-1}+(k+1)^{p-1}}{(2k+1)^{p}},   
\end{align*}
or, equivalently, 
\begin{align}\label{expressionp<2simple}
(2k+1)\left((k+1-x)^{p-1}+(k+x)^{p-1}\right)\le (k^{p-1}+(k+1)^{p-1})((1-x)^{p-1}+x^{p-1}+2k).
\end{align}
Writing $x=\frac{1}{2}+u$ for $u\in[-1/2,1/2]$, the previous inequality reduced to prove that
\begin{align*}
&(2k+1)\left(k+\frac{1}{2}\right)^{p-1}\left[\left(1-\frac{u}{k+1/2}\right)^{p-1}+\left(1+\frac{u}{k+1/2}\right)^{p-1}\right]\\
&\leq (k^{p-1}+(k+1)^{p-1})\left[2k+\frac{1}{2^{p-1}}((1-2u)^{p-1}+(1+2u)^{p-1})\right].
\end{align*}
 %In fact, this is equivalent to 
%\begin{align*}
%\frac{(n-j)^{p-1}}{n^{p-1}}\left(1-\frac{x}{n-j}\right)^{p-1}+\frac{j^{p-%1}}{n^{p-1}}\left(1+\frac{x}{j}\right)^{p-1}\le  \frac{\lfloor{}\frac{n}{2}\rfloor{}^{p-1}+\lceil{}\frac{n}{2}\rceil{}^{p-1}}{n^{p}}((1-x)^{p-1}+x^{p-1}+n-1).    
%\end{align*}
Using the expansion $(1+y)^{\alpha}=\sum_{l=0}^{\infty}\binom{\alpha}{l}y^{l}$ (converging absolutely for $-1<y<1$), this is equivalent to 
\begin{align*}
2(2k+1)\left(k+\frac{1}{2}\right)^{p-1}\sum_{l=0}^{\infty}\binom{p-1}{2l}\left(\frac{u}{k+\frac{1}{2}}\right)^{2l}\le (k^{p-1}+(k+1)^{p-1})\left(2k+\frac{2}{2^{p-1}}\left(\sum_{l=0}^{\infty}\binom{p-1}{2l}(2u)^{2l}\right)\right),
\end{align*}
or, equivalently,

\begin{align}\label{expressionp<2}
\begin{split}
2(2k+1)\left(k+\frac{1}{2}\right)^{p-1}&-(k^{p-1}+(k+1)^{p-1})\left(2k+\frac{2}{2^{p-1}}\right)\\
&\le \sum_{l=1}^{\infty}\binom{p-1}{2l}\left(2^{2l+2-p}\left((k^{p-1}+(k+1)^{p-1})-(2k+1)^{p-2l}\right)\right)u^{2l}.
\end{split}
\end{align}
We observe that $\binom{p-1}{2l}<0$ and $k^{p-1}+(k+1)^{p-1}\ge (2k+1)^{p-1}$ since $1<p<2.$ Thus, $$\binom{p-1}{2l}\left(2^{2l+2-p}\left((k^{p-1}+(k+1)^{p-1})-(2k+1)^{p-2l}\right)\right)<0,$$ for any $l>0.$ Therefore, the RHS of \eqref{expressionp<2} has only negative coefficients, and thus it is a decreasing function for $u$ in $[0,\frac{1}{2}]$ and it is an increasing function for $u$ in $[-\frac{1}{2},0]$. Then, inequality \eqref{expressionp<2} holds if it holds for $u=\frac{1}{2}$ and $u=-\frac{1}{2}$, that follows directly by going back to the expression \eqref{expressionp<2simple}, in fact, we observe that for $x=1$, and $x=0$ we have an identity in \eqref{expressionp<2simple}. From where we conclude the case $1<p<2$.
    
Now, for $p>2$ we observe the following. Let us consider the function $g:K_n\to\R$ defined by
$g(a_1):=f(a_1)$, $g(a_n):=f(a_n)$ and $g(a_i)=\frac{1}{n-2}\sum_{i=2}^{n-1}f(a_i)$. Then, since $m_g=m_f$, we have
\begin{equation*}
  (g(a_1)-m_g)^{p-1}+(m_g-g(a_n))^{p-1}  =(f(a_1)-m_f)^{p-1}+(m_f-f(a_n))^{p-1}.
\end{equation*}
On the other hand, since the function $h_1:[0,+\infty)\to\R$ defined by $h_1(x)=x^{p-1}$ is convex for $p\geq 2$, by Karamata's inequality we have
\begin{align*}
\sum_{j=1}^{n}(f(a_1)-f(a_j))^{p-1}+(f(a_j)-f(a_n))^{p-1}\geq \sum_{j=1}^{n}(g(a_1)-g(a_j))^{p-1}+(g(a_j)-g(a_n))^{p-1}.
\end{align*}
Therefore
\begin{equation}
\frac{|g(a_1)-m|^{p-1}+|m-g(a_n)|^{p-1}}{\sum_{j=1}^{n}(g(a_1)-g(a_j))^{p-1}+(g(a_j)-g(a_n))^{p-1}}\geq \frac{|f(a_1)-m|^p+|m-f(a_n)|^{p-1}}{\sum_{j=1}^{n}(f(a_1)-f(a_j))^{p-1}+(f(a_j)-f(a_n))^{p-1}}.
\end{equation}

Thus, we can assume without loss of generality that $f(a_1)\geq f(a_2)=f(a_3)=\dots=f(a_{n-1})\geq f(a_n)$. Since $f$ is not constant then, we have three scenarios
\begin{itemize}
\item $f(a_1)> f(a_2)=f(a_3)=\dots=f(a_{n-1})> f(a_n)$.
\item $f(a_1)> f(a_2)=f(a_3)=\dots=f(a_{n-1})= f(a_n)$.
\item $f(a_1)= f(a_2)=f(a_3)=\dots=f(a_{n-1})> f(a_n)$.
\end{itemize}
Since the quotients we are considering are translation and dilation invariant. In the first scenario, we can assume without loss of generality that $f(a_1)=1$, $f(a_n)=-1$ and $f(a_2)=\dots=f(a_{n-1})=x\ge 0$. On the other hand, in the second and third scenarios we can assume without loss of generality that $f(a_1)=1$ and $f(a_2)=\dots=f(a_{n-1})=f(a_n)=0$, having a Dirac delta function.

Then, we are left to prove the following inequality 
\begin{align*}
\left(1-\frac{n-2}{n}x\right)^{p-1}+\left(1+\frac{n-2}{n}x\right)^{p-1}\le C_{n,p}^{*}\left(2^{p}+(n-2)((1-x)^{p-1}+(1+x)^{p-1})\right).
\end{align*}
Defining the function $T:[0,1]\to\R$ by
$$
T(x):=C_{n,p}^{*}\left(2^{p}+(n-2)((1-x)^{p-1}+(1+x)^{p-1})\right)-\left(1-\frac{n-2}{n}x\right)^{p-1}-\left(1+\frac{n-2}{n}x\right)^{p-1}.
$$
We want to show that $T(x)\geq 0$ for all $x\in[0,1]$. We will prove that $T(0),T(1)\ge 0$ and that $T$ is monotone in $[0,1]$, we discuss each case separately. 

{\it{Case: $2<p\leq 3+\delta_{n}^1$.}} In this case,  we observe that $T(0)=0$, and we will prove that $T$ is increasing in $[0,1]$. For this is enough to prove that $T'(x)\geq 0$ for all $x\in[0,1]$, which is equivalent to 
\begin{align*}
nC_{n,p}^{*}\ge \frac{\left(1+\frac{n-2}{n}x\right)^{p-2}-(1-\frac{n-2}{n}x)^{p-2}}{(1+x)^{p-2}-(1-x)^{p-2}}:=G(x)\ \ \text{for all} \ \ x\in[0,1].
\end{align*}
{\it{Subcase $2<p\le 3$:}}
We observe that for a fix $x\in[0,1]$ the function $v_x:\left[\frac{n-2}{n},1\right]\to\R$ defined by $$v_x(y):=\left(y+\frac{n-2}{n}x\right)^{p-2}-\left(y-\frac{n-2}{n}x\right)^{p-2}$$
is decreasing in $[\frac{n-2}{n},1]$ as a function in $y$, since its derivative is $v'_x(y)=\left(y+\frac{n-2}{n}x\right)^{p-3}-\left(y-\frac{n-2}{n}x\right)^{p-3}\le 0$. Therefore, 
\begin{equation}\label{eq: cotas para 2<p<3 I}
\frac{\left(1+\frac{n-2}{n}x\right)^{p-2}-(1-\frac{n-2}{n}x)^{p-2}}{(1+x)^{p-2}-(1-x)^{p-2}}= \left(\frac{n-2}{n}\right)^{p-2}\frac{v_x(1)}{v_x(\frac{n-2}{n})}\le \left(\frac{n-2}{n}\right)^{p-2}.
\end{equation}
Moreover, since the function $r:\R\to\R$ defined by $r(p):=p-1-2^{p-2}$ is concave on $[2,3]$, then $r(p)\geq \min\{r(2),r(3)\}=0$ for all $p\in[2,3]$. Then
$$
2^{p-1}(n-2)^{p-2}\leq 2(p-1)(n-2)^{p-2}.
$$
Moreover, by the mean value theorem
$(p-1)(n-2)^{p-2}\leq \frac{n^{p-1}-(n-2)^{p-1}}{2} $. Then $2^{p-1}(n-2)^{p-2}\leq n^{p-1}-(n-2)^{p-1}$ or equivalently
\begin{equation}\label{eq: cotas para 2<p<3 II}
\left(\frac{n-2}{n}\right)^{p-2}\le \frac{n}{n-2+2^{p-1}}.
\end{equation}
Combining \eqref{eq: cotas para 2<p<3 I} and \eqref{eq: cotas para 2<p<3 II} we obtain
\begin{equation}\label{eq: cotas para 2<p<3 III}
\frac{\left(1+\frac{n-2}{n}x\right)^{p-2}-(1-\frac{n-2}{n}x)^{p-2}}{(1+x)^{p-2}-(1-x)^{p-2}} \le \frac{n}{n-2+2^{p-1}}= nC_{n,p}^{*}.
\end{equation}
Then, $T$ is increasing in $[0,1]$, as claimed. 

{\it{Subcase $3<p\le 3+\delta_n^{1}$:}} We observe that in this range, the function $x\mapsto x^{p-3}$ is strictly concave in $[1,n]$. Therefore, we have that
$$\frac{n-2}{n-1}n^{p-3}+\frac{1}{n-1}1^{p-3} < \left[\frac{n-2}{n-1}n+\frac{1}{n-1}\right]^{p-3}=(n-1)^{p-3},$$
 thus $(n-2)n^{p-3}+1< (n-1)^{p-2}$.
We notice that this implies that $$\lim_{x\to 0}G(x)=\frac{n-2}{n}<  \frac{(n-1)^{p-2}-1}{n^{p-2}}=G(1).$$
We also observe, that $G(1)\le nC_{n,p}^{*}=\frac{n}{n-2+2^{p-1}}.$ In fact, we observe that the inequality $$(n-2+4s)(n+2+4s)\le n^2,$$
is valid whenever $\frac{-\sqrt{n^2+4}-n}{4}\le s\le \frac{\sqrt{n^2+4}-n}{4}.$ So, we have that
$$
\max\left\{\frac{((n-1)^{p-3}-1)(n-1)}{4},2^{p-3}-1\right\}\leq \frac{\sqrt{n^2+4}-n}{4}.
$$
if $(n-1)^{p-3}-1\le \frac{\sqrt{n^2+4}-n}{4(n)}$. Here, we remark that this condition can be slightly improved, in particular, by replacing $4n$ by $n-1$ in the denominator for all $n\geq 4$, and, replacing $12$ by $4$ in the denominator for $n=3$. Then
\begin{align*}
((n-1)^{p-2}-1)(2^{p-1}+n-2)=(((n-1)^{p-3}-1)(n-1)+(n-2))((n+2)+4(2^{p-3}-1))\le n^{2}\le n^{p-1},
\end{align*}
%%%%%%%%%%%%%%%%%%
which implies $G(1)\le nC_{n,p}^{*}$. Therefore, we need $(n-1)^{p-3}-1\le \frac{\sqrt{n^2+4}-n}{4(n)},$ or equivalently $$p\le 3+\frac{\log(\sqrt{n^2+4}+3n)-\log(4n)}{\log(n-1)}=3+\delta_n^{1}.$$ 
Now, we need to prove that $(1+\frac{n-2}{n}x)^{p-2}-(1-\frac{n-2}{n}x)^{p-2}\le nC_{n,p}^{*}((1+x)^{p-2}-(1-x)^{p-2}),$ or equivalently
\begin{align}\label{series33+}
\sum_{k=0}^{\infty}\binom{p-2}{2k+1}\left(nC_{n,p}^{*}-\left(\frac{n-2}{n}\right)^{2k+1}\right)x^{2k+1}\ge 0,
\end{align}
for all $x\in [0,1]$.
Since $\binom{p-2}{1}>0$ and $nC^{*}_{n,p}\geq G(1)> \frac{n-2}{n}$ as seen above, we have that the first coefficient is positive. On the other hand, for $k\ge 1$ we have that $\binom{p-2}{2k+1}<0$ and $nC_{n,p}^{*}-\left(\frac{n-2}{n}\right)^{2k+1}\ge 0$, thus, all the other coefficients in the left hand side of \eqref{series33+} are negative. Let us consider the polynomials $$P_N(x):=\sum_{k=0}^{N}\binom{p-2}{2k+1}\left(nC_{n,p}^{*}-\left(\frac{n-2}{n}\right)^{2k+1}\right)x^{2k+1}.$$
We know that \eqref{series33+} holds for $x=1$, since $nC^{*}_{n,p}\geq G(1)$. Then $P_{N}(1)>0$ for $N$ sufficiently large, and $P'_N(0)>0$ (since the first coefficient in the left hand side of \eqref{series33+} is positive). Moreover, we observe that since the higher degrees coefficients are negative $\lim_{x\to \infty}P_N(x)=-\infty$. Thus, for $N$ sufficiently large, $P_N(x)$ has a root between $1$ an $\infty$. However, by the Descartes's rule of signs, $P_N(x)$ has only one positive root (since it's coefficients list has only one sign change), therefore $P_N(x)\ge 0$ for all $x\in [0,1]$ for $N$ sufficiently large. Since $$P_N(x)\to \sum_{k=0}^{\infty}\binom{p-2}{2k+1}\left(nC_{n,p}^{*}-\left(\frac{n-2}{n}\right)^{2k+1}\right)x^{2k+1},$$
pointwise in $0<x<1,$ we conclude that \eqref{series33+} holds, and then our main theorem holds for $p\in (3,3+\delta_n^{1})$.

{\it{Cases: $4<p$ and $p\in (3,4)\cap A_n$}}

In this case we prove that $T$ is decreasing in $[0,1]$, then $T(x)\geq T(1)=0$ for all $x\in[0,1]$ and $R_{n,p}=\frac{1+(n-1)^{p-1}}{n^p}:=C_{n,p}^{*}$. For this is enough to prove that  
\begin{equation}\label{eq: 4<p ineq I}
\frac{(1+\frac{n-2}{n}x)^{p-2}-(1-\frac{n-2}{n}x)^{p-2}}{(1+x)^{p-2}-(1-x)^{p-2}}\ge \frac{1+(n-1)^{p-1}}{n^{p-1}},
\end{equation}
since this implies that $T'(x)\leq 0$ for all $x\in[0,1]$.

We study first the case $p\in  [4,\infty]$.
For this, we use the following proposition.
\begin{proposition}\label{prop: for p>4}
Let $r\ge 2$ and $k$ be a positive integer. Then, for any $y\ge 1$ we have
$$(1+ky)^r-1-(ky)^r\ge (k+y)^r-k^r-y^r.$$
\end{proposition}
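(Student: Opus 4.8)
The plan is to treat the integer $k$ as a continuous real parameter on $[1,\infty)$ and to show that the difference
\[
F(k) := (1+ky)^r - 1 - (ky)^r - \big[(k+y)^r - k^r - y^r\big]
\]
is nondecreasing in $k$. A direct substitution gives $F(1) = (1+y)^r - 1 - y^r - \big[(1+y)^r - 1 - y^r\big] = 0$, so monotonicity in $k$ would immediately yield $F(k)\ge 0$ for every real $k\ge 1$, in particular for every positive integer $k$, which is exactly the assertion. This framing also predicts the equality case: when $r=2$ one checks $(1+ky)^2-1-(ky)^2 = 2ky = (k+y)^2-k^2-y^2$, so $F\equiv 0$ there.

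To prove monotonicity I would differentiate in $k$, obtaining
\[
F'(k) = ry\big[(1+ky)^{r-1} - (ky)^{r-1}\big] - r\big[(k+y)^{r-1} - k^{r-1}\big],
\]
so that $F'(k)\ge 0$ is equivalent to the key inequality
\[
y\big[(1+ky)^{r-1} - (ky)^{r-1}\big] \ \ge\ (k+y)^{r-1} - k^{r-1}. \qquad (\star)
\]
Heuristically, the left side is the increment of $t\mapsto t^{r-1}$ across an interval of length $1$ based at the large point $ky$ (amplified by $y$), while the right side is its increment across an interval of length $y$ based at the smaller point $k$; since $r-1\ge 1$ the derivative $t^{r-2}$ is increasing, so the left increment ought to dominate.

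The step I regard as the heart of the matter is turning $(\star)$ into a clean pointwise comparison by writing both sides through the fundamental theorem of calculus and rescaling each integration to the unit interval. One has
\[
y\big[(1+ky)^{r-1} - (ky)^{r-1}\big] = (r-1)y\int_0^1 (ky+s)^{r-2}\,\mathrm{d}s,
\]
and, after the substitution $t = k + ys$,
\[
(k+y)^{r-1} - k^{r-1} = (r-1)y\int_0^1 (k+ys)^{r-2}\,\mathrm{d}s.
\]
Thus $(\star)$ reduces to the pointwise estimate $(ky+s)^{r-2}\ge (k+ys)^{r-2}$ for all $s\in[0,1]$. Since $r-2\ge 0$ makes $t\mapsto t^{r-2}$ nondecreasing on $(0,\infty)$ and all the bases here are at least $1$, it suffices to verify $ky+s\ge k+ys$, i.e. $(y-1)(k-s)\ge 0$, which holds because $y\ge 1$ and $k\ge 1\ge s$.

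I would close with two remarks. First, integrality of $k$ is never used, so the argument in fact proves the proposition for every real $k\ge 1$. Second, the hypothesis $r\ge 2$ enters only through $r-1\ge 1$ (so the derivative representation has a nonnegative integrand) and $r-2\ge 0$ (monotonicity of $t^{r-2}$), with $r=2$ degenerating to the identity above. The main obstacle is precisely locating this integral/rescaling representation: a crude mean value theorem applied to the two sides of $(\star)$ only confines the intermediate points to overlapping ranges and so does not force the required ordering when $1\le y<2$, whereas after rescaling both increments to $[0,1]$ the comparison becomes pointwise and holds uniformly.
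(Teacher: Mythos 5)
Your proof is correct, but it runs along a genuinely different axis than the paper's. The paper fixes the integer $k$ and varies $y$: it checks equality at $y=1$ and shows that $h(y)=(1+ky)^r+k^r+y^r-1-(k+y)^r-(ky)^r$ is increasing, reducing $h'(y)\ge 0$ to the inequality $k(1+ky)^{r-1}+y^{r-1}\ge (k+y)^{r-1}+k(ky)^{r-1}$, which it gets from Karamata's inequality applied to the $(k+1)$-tuples $(1+ky,\dots,1+ky,y)$ and $(k+y,ky,\dots,ky)$ --- a step that essentially uses $k\in\mathbb{N}$ to form the tuples. You instead fix $y\ge 1$ and vary $k$ continuously from the equality case $k=1$, and your key step replaces majorization by the integral representations $y\bigl[(1+ky)^{r-1}-(ky)^{r-1}\bigr]=(r-1)y\int_0^1(ky+s)^{r-2}\,\mathrm{d}s$ and $(k+y)^{r-1}-k^{r-1}=(r-1)y\int_0^1(k+ys)^{r-2}\,\mathrm{d}s$, after which the comparison is pointwise via $(ky+s)-(k+ys)=(y-1)(k-s)\ge 0$; all steps check out, including the use of $r-1\ge 1$ and $r-2\ge 0$. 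What your route buys is that integrality of $k$ is never used, so you prove the statement for every real $k\ge 1$ (a genuine, if unneeded, strengthening: the paper only applies it with $k=n-1$), and the argument is self-contained, avoiding Karamata. What the paper's route buys is brevity --- the derivative inequality is dispatched in one line by majorization --- and it stays uniform with the rest of the manuscript, which invokes Karamata repeatedly. Your closing remark about why a crude mean value theorem fails on $(\star)$ is apt: the rescaling to a common unit interval is precisely the point where your argument gains over naive two-sided MVT estimates.
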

\begin{proof}
We observe that the statement is true for $y=1$ (in fact, we have an identity). The result follows from the fact that $$h(y):=(1+ky)^r+k^r+y^r-1-(k+y)^r-(ky)^r$$ is increasing. Since, its derivative is $$h'(y)=kr(1+ky)^{r-1}+ry^{r-1}-r(k+y)^{r-1}-kr(ky)^{r-1},$$
and $k(1+ky)^{r-1}+y^{r-1}\ge (k+y)^{r-1}+k(ky)^{r-1},$
by Karamata's inequality, since the $(k+1)$-tuple $(1+ky,\dots,1+ky,y) $ majorizes $(k+y,ky, \dots,ky)$ or $(ky,\dots,ky, k+y)$ and $x\mapsto x^{r-1}$ is convex. 
\end{proof}
Then, using that
$$
\frac{(n-1)^{p-2}-1}{n^{p-2}}\ge \frac{1+(n-1)^{p-1}}{n^{p-1}},
$$
for all $p\geq 4, n\geq 3$ (since $(n-1)^{p-2}\geq (n-1)^2\geq n+1$). In order to have \eqref{eq: 4<p ineq I}, we just need to prove
\begin{align}
 \frac{(1+\frac{n-2}{n}x)^{p-2}-(1-\frac{n-2}{n}x)^{p-2}}{(1+x)^{p-2}-(1-x)^{p-2}}\ge \frac{(n-1)^{p-2}-1}{n^{p-2}}.   
\end{align}
This is can be rewritten as follows
\begin{align*}
 &(n+(n-2)x)^{p-2}+((n-1)(1-x))^{p-2}+(1+x)^{p-2}\\
 &\ge (n-(n-2)x)^{p-2}+((n-1)(1+x))^{p-2}+(1-x)^{p-2}.   
\end{align*}
Or, equivalently
\begin{align*}
 &\left((n-1)\frac{1+x}{1-x}+1\right)^{p-2}+(n-1)^{p-2}+\left(\frac{1+x}{1-x}\right)^{p-2}\\
 &\ge \left((n-1)+\frac{1+x}{1-x}\right)^{p-2}+\left((n-1)\frac{1+x}{1-x}\right)^{p-2}+1,  \end{align*}
which follows as a consequence of our Proposition \ref{prop: for p>4} by taking $r=p-2,$ $k=n-1$ and $y=\frac{1+x}{1-x}.$\\

Now we study the case when $p\in \mathcal{A}_n\cap [3,4)$, by the definition of $\mathcal{A}_n$, similarly to \eqref{eq: 4<p ineq I} it is enough to prove that
\begin{align}
\frac{(1+\frac{n-2}{n}x)^{p-2}-(1-\frac{n-2}{n}x)^{p-2}}{(1+x)^{p-2}-(1-x)^{p-2}}\ge \frac{n-2}{n}.  
\end{align}
Or, equivalently 
\begin{align*}
n\left(1+\frac{n-2}{n}x\right)^{p-2}-n\left(1-\frac{n-2}{n}x\right)^{p-2}-(n-2)(1+x)^{p-2}+(n-2)(1-x)^{p-2}:=\alpha(x)\ge 0.   
\end{align*}
We notice that $\alpha(0)=0$. Moreover, we have that 
\begin{align*}
\alpha'(x)=(p-2)(n-2)\left[\left(1+\frac{n-2}{n}x\right)^{p-3}+\left(1-\frac{n-2}{n}x\right)^{p-3}-(1+x)^{p-3}-(1-x)^{p-3}\right]\ge 0, 
\end{align*}
for all $x\in[0,1]$, by Karamata's inequality, since the function $x\mapsto x^{p-3}$ is concave. Then $\alpha$ is increasing on $[0,1]$, thus $\alpha(x)\geq0$ for all $x\in[0,1]$ as desired. 
%\end{proof}

This concludes the proof of our main theorem.
\end{proof}

%%%%%%%%%%%%%%%%%%%%%%%

\section{Acknowledgements.}
The author C. G-R. was funded by FCT/Portugal through project UIDB/04459/2020 
with DOI identifier 10-54499/UIDP/04459/2020.
J.M. was partially supported by the AMS Stefan Bergman Fellowship and the Simons Foundation Grant $\# 453576$. The second author is thankful to Jos\'e Gaitan for helpful discussion.

\bibliographystyle{amsplain}

\begin{thebibliography}{99}

\bibitem{BG}
S. G. Bobkov, F. Gotze,
\newblock {Discrete isoperimetric and Poincare-type inequalities},
\newblock{\em{Probab. Theory Relat. Fields 114, 245–-277 (1999).}}


\bibitem{GM}
C. Gonzalez-Riquelme and J. Madrid, 
\newblock {Sharp inequalities for maximal operators on finite graphs},
\newblock{\em{J. Geom. Anal 31 (10), 9708--9744 (2021).}}

\bibitem{GM2}
C. Gonzalez-Riquelme and J. Madrid, 
\newblock {Sharp inequalities for maximal operators on finite graphs II},
\newblock{\em{J. Math. Anal. Appl. 506 (2), 2022, 125647.}}

\bibitem{IV}
P. Ivanisvili and A. Volberg, 
\newblock{Poincare inequality 3/2 on the Hamming cube},
\newblock{\em{Rev. Mat. Iberoam. 36, 1 79--97 (2019).}}

\bibitem{ILVV}
P. Ivanisvili, D. Li, R. Van Handel and A. Volberg,
\newblock{Improving constant in the end-point Poincare inequality on the Hamming cube},
\newblock{\em{https://arxiv.org/pdf/1811.05584.}}


\bibitem{LSTV}
M. Levi, F. Santagati, A. Tabacco and M. Vallarino.
\newblock{Poincare Inequalities on Graphs},
\newblock{\em{Anal. Math., 49 (2) (2023), 529–-544.}}

\bibitem{ST1}
J. Soria and P. Tradacete,  
\newblock {Best constant for the Hardy-Littlewood maximal operator on finite graphs},
\newblock {\em{J. Math. Anal. Appl. Math. 436 (2016), no. 2, 661--682.}}



\end{thebibliography}

\end{document}